\newtheorem{theorem}{Theorem}[section]
\newtheorem{proposition}[theorem]{Proposition}
\newtheorem{corollary}[theorem]{Corollary}
\newtheorem{sub-lemma}[theorem]{Sub-Lemma}
\def\L{\mathcal{L}}
\def\1{{{\mathit 1} \!\!\>\!\! I} }
\begin{document}

\title[]{Mixing rate in infinite measure for $\mathbb Z^d$-extension,
      application to the periodic Sinai billiard}
\author{Fran\c{c}oise P\`ene}
\address{1)Universit\'e de Brest, Laboratoire de
Math\'ematiques de Bretagne Atlantique, CNRS UMR 6205, Brest, France\\
2)Fran\c{c}oise P\`ene is supported by the IUF.}
\email{francoise.pene@univ-brest.fr}
\keywords{}
\subjclass[2000]{Primary: 37B20}
\begin{abstract}
We study the rate of mixing of observables of $\mathbb Z^d$-extensions of probability preserving dynamical systems.
We explain how this question is directly linked to the local limit theorem and establish a rate of mixing for general classes of observables of the $\mathbb Z^2$-periodic Sinai billiard. We compare our approach with the induction method.
\end{abstract}
\date{\today}
\maketitle
\bibliographystyle{plain}

A measure preserving dynamical system $(X,f,\mu)$ is given by
a measure space $(X,\mu)$ and a measurable
$\mu$-preserving transformation $f$. 
Given such a dynamical system, the study of its mixing properties means the study of the behaviour of quantities of the following form:
\begin{equation}\label{MIXING}
\int_X u.v\circ f^n\, d\mu\, \quad\mbox{ as }n\rightarrow +\infty.
\end{equation}
When $\mu$ is a probability measure, $(X,f,\mu)$
is said to be mixing if, for every $u,v\in\L^2(\mu)$, 
\eqref{MIXING} converges to the product of integrals $\int_X u\, d\mu\,\int_X v\, d\mu$. 

Assume from now on that $\mu$ is a $\sigma$-finite measure.
As pointed out by
\cite{KS}, there is no reasonable generalization of mixing. Nevertheless, it makes sense to investigate the behaviour of 
\eqref{MIXING}.
More precisely, we are interested in proving that \eqref{MIXING} suitably normalized converges to $\int_X u\, d\mu \int_X v\, d\mu$. We call mixing rate
the corresponding normalization.

Mixing rates (and refined estimates) in infinite measure have been studied by 
Thaler \cite{Thaler}, Melbourne and Terhesiu \cite{MT}, Gou\"ezel
\cite{Gouezel}, Bruin and Terhesiu \cite{BT}, Liverani and Terhesiu
\cite{LT} for a wide family of dynamical systems
including the Liverani-Saussol-Vaienti maps, etc. The method used by these authors is induction.

We emphasize here on the fact that, in the context of $\mathbb Z^d$-extensions, such results are related to precised local limit theorems see
\cite{GH,SV1,SV2}. In particular mixing properties for the periodic planar Sinai billiard have been established
in \cite[Prop. 4]{FP09a} and in \cite[Prop. 4.1]{ps10} for indicator functions of some bounded sets, with three different applications in \cite{FP09a,FP09b,ps10}. We are interested here in stating such results for general functions (with full support). We will present our general approach and use it to establish a mixing rate for a general class of functions in the context of the $\mathbb Z^2$-periodic Sinai billiard.

In some sense, these two approaches are converse one from the other.
Indeed, whereas for the first mentioned method, the mixing rate follows from an estimate of the tail distribution of some return time; for the method we use here, we first prove the mixing rate and
can deduce from it the asymptotic behaviour of the tail distribution of the first return time (see \cite[Thm. 1]{DSV} and \cite[Prop. 4.2]{ps10}).

For both methods, the link between tail distribution return time and 
mixing is given by a renewal equation.
\section{Mixing via induction}
The strategy of the proof via induction consists:
\begin{enumerate}
\item to consider a set $Y\subset X$ of finite measure 
satisfying nice properties; in particular, $\left(\mu\left(Y\cap\{\varphi>n\}\right)\right)_n$  is regularly varying, where $\varphi$ is the first return time to $Y$: $\varphi(y):=\inf\{n\ge 1\ :\ f^ny\in Y\}$.
\item to prove good estimates
for $R_n:v\mapsto \mathbf 1_Y L^n\left(\mathbf 1_{Y\cap \{\varphi=n\}}v \right)$, where $L$ is the transfer operator of $u\mapsto u\circ f$, which is defined by $\int_XLu.v\, d\mu=\int_Xu.\circ f.v\, d\mu$.
\item to deduce from the estimates of $R_n$ and from the renewal equation:
$$\forall n\ge 1,\quad T_n=\sum_{j=1}^n  T_{n-j}R_j,\quad
      \mbox{with}\ \  T_n:v\mapsto \mathbf 1_Y L^n\left(\mathbf 1_Y v\right)$$
an estimate on $T_n$ of the following form:
$$T_n\sim \mu(Y\cap f^{-n}Y)\mathbb E_\mu[\cdot\mathbf 1_Y],$$
on some Banach space $\mathcal B$ of functions $w:X\rightarrow\mathbb C$.
\item to deduce:
$$\int_X v.T_nu\, d\mu=\int_X v.L^nu\, d\mu=\int_X u.v\circ f^n\, d\mu,$$
for every $u,v:X\rightarrow \mathbb C$ supported in $Y$ such that $v\in\mathcal B$
and $w\mapsto \int_Y u.w\, d\mu$ is in $\mathcal B'$.
\item to go to the general situation (functions with full support in $X$) by considering the sets
$A_k:=f^{-k}Y\setminus\bigcup_{\ell=0}^{k-1}f^{-\ell}Y$.
\end{enumerate}

\section{$\mathbb Z^d$-extensions: local limit theorem and mixing}
We consider from now on the special case where $(X,f,\mu)$ is a $\mathbb Z^d$-extension of
a probability preserving dynamical system $(\bar X,\bar f, \bar\mu)$ by $\psi: \bar X\rightarrow \mathbb Z^d$, that is
$X=\bar X\times \mathbb Z^d$, $f(x,k)=(\bar f(x),k+\psi(x))$
and $\mu=\bar\mu\otimes \lambda_d$, where $\lambda_d$
is the counting measure on $\mathbb Z^d$.
Observe that $f^n(x,k)=(\bar f^n(x),k+S_n(x))$, with $S_n:=\sum_{k=0}^{n-1}\psi\circ \bar f^k$.
We set $Y:=X\times \{0\}$.

The crucial idea in this context is to consider a situation where
$(S_n/a_n)_n$ converges in distribution to a stable random variable $B$ and the strategy is then:
\begin{enumerate}
\item to prove a 
local limit theorem (LLT):
$$\forall \ell\in\mathbb Z^d,\quad
     \bar\mu(S_n=\ell)=(\Phi_B(\ell/a_n)+o(1))a_n^{-d}\, ,\mbox{ as }n\rightarrow +\infty\, ,$$
where $\Phi_B$
is the density function of $B$,
and more precisely a "spectral LLT":
\begin{equation}\label{spectralTLL}Q_{n,\ell}:=P^n\left(\mathbf 1_{\{S_n=\ell\}}\cdot\right)= \frac{\Phi_B(\ell/a_n)\mathbb E_{\bar\mu}[\cdot]+\varepsilon_{n,\ell}}{a_n^d}\, ,\quad
\mbox{with}\ \lim_{n\rightarrow +\infty}\sup_\ell \Vert\varepsilon_{n,\ell}\Vert=0\, ,
\end{equation}
on some Banach space $(\mathcal B,\Vert\cdot\Vert)$ of functions $w:\bar X\rightarrow\mathbb C$,
where $P$ is the transfer operator of $\bar f$ (see \cite[Lem. 2.6]{damiensoaz}
for a proof of such a result in a general context).

The following identity makes a relation between 
$Q_{n,0}$ and the operator $T_n$ presented in the previous section:$$(T_{n} v)(x,\ell)=(Q_{n,0}(v(\cdot,0)))(x)\mathbf 1_{\ell=0}.$$

Note that the LLT is already a decorrelation result since:
$$\int_X\mathbf 1_Y.\mathbf 1_Y\circ f^n\, d\mu=\mu\left(Y\cap f^{-n}Y\right)=\bar\mu(S_n=0).$$
\item to use \eqref{spectralTLL} and the definition of $P$ to deduce a mixing result:
$$
\int_X \mathbf 1_{\bar X\times\{k\}}u. (v\mathbf 1_{\bar X\times\{\ell\}})\circ f^n\, d\mu
= \int_{\bar X} u(x,k). v(\bar f^n(x),\ell)\mathbf 1_{\{S_n(x)=\ell-k\}}\, d\bar\mu(x)$$
\begin{eqnarray}
&=& \int_{\bar X} v(\cdot,\ell) P^n\left(u(\cdot,k). \mathbf 1_{\{S_n(\cdot)=\ell-k\}}\right)\, d\bar\mu(x)\nonumber\\
&=&    \Phi_B(0)a_n^{-d}\int_{\bar X\times\{k\}} u\, d\mu\,
         \int_{\bar X\times\{\ell\}} v\, d\mu+o(a_n^{-d})\, ,  \label{DECO1}
\end{eqnarray}
valid for every $k,\ell\in\mathbb Z^d$ and for every $u,v$ such that $u_\ell:=u(\cdot, \ell)\in \mathcal B$
and such that $w\mapsto \int_{\bar X} v_k(y)w(y)\,d\bar\mu(y)$
is in $\mathcal B '$, with $v_k(y):=v(y,k)$, since $\Phi_B$
is continuous.
\item to generalize this as follows:
\begin{eqnarray*}
\int_X u. v\circ f^n\, d\mu &=& \sum_{\ell,m}\int_X \mathbf 1_{\bar X\times\{k\}}u_\ell. (v_m\mathbf 1_{\bar X\times\{m\}})\circ f^n\, d\mu\\
&=&a_n^{-d}\left(o(1)+\sum_{k,m}
    \Phi_B\left(\frac{k-m}{a_n}\right)\int_{\bar X\times \{\ell\}}u\, d\mu \, \int_{\bar X\times \{m\}}v\, d\mu\right) \\
&=&\Phi_B(0)a_n^{-d}\int_X u\, d\mu\,
         \int_X v\, d\mu+o(a_n^{-d})\, ,
\end{eqnarray*}
which holds true as soon as 
$\sum_\ell\Vert u_\ell\Vert_{\mathcal B}<\infty$
and $\sum_\ell\Vert \mathbb E_{\bar\mu}[v_\ell\, \cdot ]\Vert_{\mathcal B'}<\infty$,
since $\Phi_B$ is continuous and bounded, where we used again the notations $u_\ell:=u(\cdot,\ell)$ and $v_m:=v(\cdot,m)$.
\item to go from \eqref{spectralTLL} to the study of 
$\bar\mu(\varphi>n) $, where $\varphi(x)$ is the first return time
from $(x,0)$ to $Y=\bar X\times\{0\}$, using the classical following renewal equation \cite{DE}:
$$\mathbf 1=\sum_{j=0}^{n} \mathbf 1_{\{\varphi>n-j\}}
   \circ \bar f^j.
      \mathbf 1_{\{S_j=0\}}\quad \mbox{on}\ \bar X\, $$
where $j$ plays the role of the last visit time to $Y$ before time $n$.
Hence, applying $P^n$, this leads to:
$$\mathbf 1=\sum_{j=0}^n U_{n-j}Q_{j,0},\quad\mbox{with}\ U_k:= P^k\left(\mathbf 1_{\{\varphi>k\}}\, \cdot\right).$$
This kind of properties has been used in \cite{FP09b} to study the asymptotic behaviour of the number of different obstacles visited by the Lorentz process up to time $n$, in \cite{ps10} to study some quantitative recurrence properties.
\end{enumerate}
\section{Example: Lorentz process}
Consider a $\mathbb Z^2$-periodic configuration of obstacles in the plane:
$O_i+\ell$, $i=1,...,I$, $\ell\in\mathbb Z^2$, with $I\ge 2$. We assume that the $O_i$ are convex open sets, with $C^3$-smooth boundary with non null curvature. We assume that the closures of any couple of distinct obstacles
$O_i+\ell$ and $O_j+m$ are disjoint.
The Lorentz process describes the displacement in $Q:=\mathbb R^2\setminus \bigcup_{\ell\in\mathbb Z^2}\bigcup_{i=1}^IO_{i,\ell}$ of a point particle 
moving with unit speed and with elastic reflection off the obstacles
(i.e. reflected angle=incident angle).
We assume that the horizon is finite, i.e. that each trajectory meets at least one obstacle.

We consider the dynamical system $(M,T,\nu)$ corresponding to the collision times, where $M$ is the set of reflected vectors, where $T:M\rightarrow M$ is the transformation mapping a reflected vector to the reflected vector at the next collision time and where $\nu$ is the invariant measure absolutely continuous with respect to the Lebesgue measure. 
For every $\ell\in\mathbb Z^2$, we write $\mathcal C_\ell$
for the set of reflected vectors which are based on $\bigcup_{i=1}^I(O_i+\ell)$. 
Up to a renormalization of $\nu$, we assume that
$\nu(\mathcal C_0)=1$. We call $\mathcal C_\ell$ the $\ell$-cell.

It is well known that $(M,T,\nu)$ can be represented as the
$\mathbb Z^2$-extension $(X,f,\mu)$ of $(\bar X,\bar f,\bar\mu)$
by $\psi:\bar X\rightarrow\mathbb Z^2$, where $\bar X=\mathcal C_0$, $\bar\mu=\bar\nu(\mathcal C_0\cap\cdot)$, where $\bar f$ and $\psi$
are such that $T(q,\vec v)=(q'+\psi(q,\vec v),\vec v')$
if $(q',\vec v')=\bar f(q,\vec v)$ ($\bar f$ corresponds to $T$ quotiented by the equality of positions modulo $\mathbb Z^2$).
 Note that $S_n(x):=\sum_{k=0}^{n-1}\psi\circ \bar f^k(x)$ is
the label of the cell in which the particle starting from configuration $x\in\mathcal C_0$ is at the $n$-th reflection time.

The dynamical system $(\bar X,\bar f,\bar\mu)$ is the Sinai
billiard \cite{Sinai70,ChernovMarkarian}. Central limit theorems in this context have been established
in \cite{BS81,BCS91,young}. 
In particular $(S_n/\sqrt{n})_n$
converges in distribution, with respect to $\bar\mu$ to 
a centered gaussian random variable $B$ with non-degenerate variance matrix $\Sigma$, so $\Phi_B(x)=e^{-\frac {\langle \Sigma x,x\rangle}2}/(2\pi\sqrt{\det \Sigma})$.

Let $R_0\subset \bar X$ be the set of reflected vectors that are tangent to $\bigcup_{i=1}^I\partial O_i$. The billiard map $\bar f$
defines a $C^1$-diffeomorphism from $\bar X\setminus(R_0\cup \bar f^{-1}R_0)$ onto  $\bar X\setminus(R_0\cup \bar f R_0)$.
For any integers $k\le k'$, we set $\xi_k^{k'}$ for the partition
of $\bar X\setminus \bigcup_{j=k}^{k'} \bar f^{-j}R_0$ in connected components and $\xi_k^\infty:=\bigvee _{j\ge k}\xi_k^j$.
For any $\bar u:\bar X\rightarrow \mathbb R$ and $-\infty< k\le k'\le \infty$, we define the following local continuity modulus:
$$\omega_k^{k'}(\bar u,\bar x):= \sup_{\bar y\in\xi_{k}^{k'}(\bar x)}|\bar u(\bar x)-\bar u(\bar y)|.$$

The following result is established thanks to the use of the towers constructed by Young in \cite{young}.
\begin{proposition}\label{pro:TLLbillZ2}
Let $p>1$. There exists $c>0$ such that, for any $k\ge 1$, for any measurable functions $\bar u,\bar v:\bar X\rightarrow \mathbb R$ such that $\bar u$ is $\xi_{-k}^k$-measurable and $\bar v$ is $\xi_{-k}^\infty$-measurable, for every $n>2k$ and for every $\ell\in\mathbb Z^2$,
$$\left|\mathbb E_{\bar\mu}\left[\bar u\, \mathbf 1_{\{S_n=\ell\}}\, \bar v\circ \bar f^k\right]-\frac{\Phi_B\left(\frac\ell{\sqrt{n-2k}}\right)}{n-2k}\int_{\bar X}\bar u\, d\bar\mu\, \int_{\bar  X}\bar v\, d\bar\mu\right|\le\frac{ck\Vert \bar v\Vert_p \Vert \bar u\Vert_\infty}{(n-2k)^{\frac 32}}.$$
\end{proposition}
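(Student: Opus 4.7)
The plan is to reduce the two-sided quantity to the spectral local limit theorem \eqref{spectralTLL} applied on the middle window of length $n-2k$, using the first and last $k$ steps as buffer zones that absorb respectively the past-dependence of $\bar u$ (encoded by the $\xi_{-k}^k$-measurability) and that of $\bar v$ (encoded by the $\xi_{-k}^\infty$-measurability).

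\textbf{Block decomposition.} Writing $S_n = S_k + S_{n-2k}\circ\bar f^k + S_k\circ\bar f^{n-k}$ and summing over the boundary values $\ell_1 = S_k$ and $\ell_2 = S_k\circ\bar f^{n-k}$, two applications of the duality $\mathbb E_{\bar\mu}[f\cdot g\circ\bar f^m]=\mathbb E_{\bar\mu}[P^m f\cdot g]$ (first with $m=k$ applied to $\bar u\,\mathbf 1_{\{S_k=\ell_1\}}$, then with $m=n-2k$) recast
$$\mathbb E_{\bar\mu}\bigl[\bar u\,\mathbf 1_{\{S_n=\ell\}}\,\bar v\circ\bar f^k\bigr] = \sum_{\ell_1,\ell_2\in\mathbb Z^2}\mathbb E_{\bar\mu}\bigl[Q_{n-2k,\ell-\ell_1-\ell_2}\bigl(\bar v\cdot Q_{k,\ell_1}(\bar u)\bigr)\cdot \mathbf 1_{\{S_k=\ell_2\}}\bigr].$$
I apply \eqref{spectralTLL} to $Q_{n-2k,\ell-\ell_1-\ell_2}$ on the quasi-H\"older Banach space $\mathcal B$ built from Young's tower for the Sinai billiard, with the sharp Sz\'asz--Varj\'u remainder $\|\varepsilon_{m,j}\|_{\mathcal B\to L^\infty}=O(m^{-1/2})$. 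The leading part is $(n-2k)^{-1}\Phi_B((\ell-\ell_1-\ell_2)/\sqrt{n-2k})\int\bar v\cdot Q_{k,\ell_1}(\bar u)\,d\bar\mu$, and the remainder is of order $(n-2k)^{-3/2}\|\bar v\cdot Q_{k,\ell_1}(\bar u)\|_{\mathcal B}$.

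\textbf{Main-term collapse.} Replace $\Phi_B((\ell-\ell_1-\ell_2)/\sqrt{n-2k})$ by $\Phi_B(\ell/\sqrt{n-2k})$ via the Lipschitz continuity of $\Phi_B$; the resulting error, weighted by $|\ell_1+\ell_2|/\sqrt{n-2k}$ and summed, is controlled by the moment estimate $\|S_k\|_{p'}\leq C\sqrt k$ (from the boundedness of $\psi$ in the finite-horizon regime) combined with H\"older's inequality against $\bar v\in L^p$, contributing $O(\sqrt k\|\bar u\|_\infty\|\bar v\|_p/(n-2k)^{3/2})$. After this substitution the double sum telescopes via $\sum_{\ell_1}Q_{k,\ell_1}(\bar u)=P^k\bar u$ and $\sum_{\ell_2}\bar\mu(S_k=\ell_2)=1$ to
$$\frac{\Phi_B(\ell/\sqrt{n-2k})}{n-2k}\int_{\bar X}\bar u\cdot\bar v\circ\bar f^k\,d\bar\mu,$$
and the residual discrepancy $\int\bar u\cdot\bar v\circ\bar f^k\,d\bar\mu-\int\bar u\,d\bar\mu\int\bar v\,d\bar\mu$ is absorbed via the exponential decay of correlations of the Sinai billiard on Young's tower, valid for our pair $(\bar u,\bar v)$ of $\xi_{-k}^k$- and $\xi_{-k}^\infty$-measurable functions (the complexity cost of the partition refinement being dominated by the spectral gap of $P$).

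\textbf{Main obstacle.} The spectral-LLT remainder aggregated over $\ell_1,\ell_2$ contributes $\frac{C}{(n-2k)^{3/2}}\sum_{\ell_1}\|\bar v\cdot Q_{k,\ell_1}(\bar u)\|_{\mathcal B}$, so the heart of the proof is the bound
$$\sum_{\ell_1\in\mathbb Z^2}\bigl\|\bar v\cdot Q_{k,\ell_1}(\bar u)\bigr\|_{\mathcal B}\leq Ck\|\bar u\|_\infty\|\bar v\|_p.$$
The linear factor $k$ (rather than $k^2$, as a crude count of the support $|S_k|\leq Ck$ would give) combines three ingredients: the $k$-fold smoothing of $P^k$ implicit in $Q_{k,\ell_1}$ that dissolves the past-dependence of $\bar u$ and yields $\|Q_{k,\ell_1}(\bar u)\|_{\mathcal B}\lesssim\|\bar u\|_\infty\,\bar\mu(S_k=\ell_1)^{1/q}$; an $L^p$-$L^{p'}$ H\"older pairing between $\bar v$ and $Q_{k,\ell_1}(\bar u)$ in the dual of $\mathcal B$, which forces $p>1$ to keep $p'<\infty$; and the local limit bound $\bar\mu(S_k=\ell_1)\sim k^{-1}\Phi_B(\ell_1/\sqrt k)$ confining the effective sum to a box of side $O(\sqrt k)$. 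Balancing these three so that only a single factor $k$ (and not $k^{2-2/q}$ or worse) appears in the final estimate is the delicate point, and is where the specific structure of Young's quasi-H\"older space and the finite-horizon condition are essential.
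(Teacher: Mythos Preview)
Your route differs from the paper's. The paper simply reruns \cite[Prop.~4.1]{ps10}: one lifts $\bar u\circ\bar f^k$ and $\bar v\circ\bar f^k$ to functions $\hat u,\hat v$ on Young's tower (this is where the $\xi_{-k}^k$- and $\xi_{-k}^\infty$-measurability are used), writes $\mathbf 1_{\{S_n=\ell\}}=(2\pi)^{-2}\int_{[-\pi,\pi]^2}e^{it\cdot(S_n-\ell)}\,dt$, and works with the Nagaev--Guivarc'h perturbed operators $P_t$ on the tower Banach space. The only new input is the single bound $\sup_t\Vert P_t^kP^k\hat u\Vert\le c_0\Vert\bar u\Vert_\infty$; the Fourier integral then produces the main term and the $O((n-2k)^{-3/2})$ remainder directly, with $\bar v$ entering only through the $L^{p'}$--$L^p$ pairing at the very end. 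No sum over intermediate cells $\ell_1,\ell_2$ ever appears.

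Your physical-space block decomposition, by contrast, has a genuine gap at the point you yourself call the ``main obstacle''. The spectral LLT \eqref{spectralTLL} is an estimate on a Banach space $\mathcal B$ of \emph{regular} functions (H\"older-type on Young's tower). You feed it $\bar v\cdot Q_{k,\ell_1}(\bar u)$, but $\bar v$ is only assumed to be in $L^p$ and $\xi_{-k}^\infty$-measurable; multiplying by $\bar v$ destroys whatever regularity $Q_{k,\ell_1}(\bar u)$ might have, so the product is \emph{not} in $\mathcal B$ and the bound $\Vert\varepsilon_{m,j}\Vert_{\mathcal B}=O(m^{-1/2})$ does not apply. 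Consequently the claimed inequality $\sum_{\ell_1}\Vert\bar v\cdot Q_{k,\ell_1}(\bar u)\Vert_{\mathcal B}\le Ck\Vert\bar u\Vert_\infty\Vert\bar v\Vert_p$ is not even well-posed. A related symptom: in the ``main-term collapse'' you invoke exponential decay of correlations to kill $\int\bar u\cdot\bar v\circ\bar f^k\,d\bar\mu-\int\bar u\,d\bar\mu\int\bar v\,d\bar\mu$, but nothing separates $\bar u$ (which is $\xi_{-k}^k$-measurable) from $\bar v\circ\bar f^k$ (which is $\xi_0^\infty$-measurable); these can be fully correlated, so that difference is in general of order one. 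The fix is exactly what the paper does: keep $\bar v$ \emph{outside} the operator estimate (pairing it by H\"older at the end) and carry out the smoothing on the tower, where $P_t^kP^k$ maps $L^\infty$ into $\mathcal B$ uniformly in $t$.
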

\begin{proof}
The proof of this result is exactly the same as the proof of \cite[prop 4.1]{ps10}, by replacing $\mathbf 1_{A}$ by $\bar u$, $\mathbf 1_B$ by $\bar v$, $\mathbf 1_{\hat A}$ and $\mathbf 1_{\hat B}$ by respectively
$\hat u$ and $\hat v$ such that: $\hat u\circ\hat \pi
=\bar u\circ T^k\circ\tilde \pi$ and  $\hat v\circ\hat \pi
=\bar v\circ T^k\circ\tilde \pi$. With the notations of \cite{ps10},
we have $\sup_{t\in[-\pi,\pi]^2}\Vert P_t^kP^k\hat u\Vert \le c_0\Vert u\Vert_\infty$. So that $\bar\mu(B)^{1/p}$ of \cite[p. 865]{ps10} is replaced by $\Vert \bar u\Vert_\infty \Vert \bar v\Vert_p$.
\end{proof}

For any $u,v:M\rightarrow \mathbb R$ and $k\in\mathbb Z^2$, we set as previously: $u_k:=u(\cdot,k)$
and $v_k:=v(\cdot,k)$. 
\begin{theorem}
Let $p>1$ and $u,v:X\rightarrow \mathbb R$ measurable such that 
\begin{equation}\label{HYP1}
\sum_{\ell\in\mathbb Z^2}(\Vert u_\ell\Vert_\infty+\Vert v_\ell\Vert_p)<\infty \, ,
\end{equation}
\begin{equation}\label{HYP1bis}
\forall k\ge 1,\quad \sum_{\ell\in\mathbb Z^2}\Vert \omega_{-k}^{\infty} (v_\ell,\cdot)\Vert_p<\infty \, ,
\end{equation}
\begin{equation}\label{HYP2}
\lim_{k\rightarrow +\infty}\sum_{\ell\in\mathbb Z^2}
  \left(\Vert\omega_{-k}^k(u_\ell,\cdot)\Vert_1
  +\Vert \omega_{-k}^{\infty}(v_\ell,\cdot)\Vert_1\right)=0.
\end{equation}
Then
\begin{equation}\label{MIXBILL}
\int_X u.v\circ f^n\, d\mu= \frac{\Phi_B(0)}{n}\int_Xu\, d\mu
      \int_Xv\, d\mu+o(n^{-1}).
\end{equation}
\end{theorem}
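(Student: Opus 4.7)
The plan is to carry out the general $\mathbb Z^d$-extension recipe of Section 2 cell by cell. First I decompose
\[
\int_X u\cdot v\circ f^n\,d\mu=\sum_{\ell,m\in\mathbb Z^2}\int_{\bar X}u_\ell\,(v_m\circ\bar f^n)\,\mathbf 1_{\{S_n=m-\ell\}}\,d\bar\mu.
\]
Since $u_\ell$ and $v_m$ lack the measurability demanded by Proposition~\ref{pro:TLLbillZ2}, I introduce an auxiliary integer $k\ge 1$ (to be sent to infinity after $n$) and set $u_\ell^{(k)}:=\mathbb E_{\bar\mu}[u_\ell\,|\,\xi_{-k}^{k}]$, $v_m^{(k)}:=\mathbb E_{\bar\mu}[v_m\,|\,\xi_{-k}^{\infty}]$. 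These preserve integrals, and the pointwise errors satisfy $|u_\ell-u_\ell^{(k)}|\le\omega_{-k}^{k}(u_\ell,\cdot)$ and $|v_m-v_m^{(k)}|\le\omega_{-k}^{\infty}(v_m,\cdot)$.

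On the approximated pieces, Proposition~\ref{pro:TLLbillZ2} (after reducing to its $\bar v\circ\bar f^k$ form via the $\bar\mu$-invariance of $\bar f$) yields
\[
\int u_\ell^{(k)}(v_m^{(k)}\circ\bar f^n)\mathbf 1_{\{S_n=m-\ell\}}\,d\bar\mu=\frac{\Phi_B\!\left(\frac{m-\ell}{\sqrt{n-2k}}\right)}{n-2k}\int u_\ell\,d\bar\mu\int v_m\,d\bar\mu+O\!\left(\frac{k\|u_\ell\|_\infty\|v_m\|_p}{(n-2k)^{3/2}}\right).
\]
Summed over $(\ell,m)$, \eqref{HYP1} turns the error term into $O(k/n^{3/2})=o_k(1/n)$, while the continuity and boundedness of $\Phi_B$ together with dominated convergence (from \eqref{HYP1}) send the main term, as $n\to\infty$, to $\frac{\Phi_B(0)}{n}\int u\,d\mu\int v\,d\mu+o(1/n)$.

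It remains to bound the approximation error $\sum_{\ell,m}|J_{\ell,m}(n)-J_{\ell,m}^{(k)}(n)|$. Splitting into a $u$-piece and a $v$-piece, I dominate $\omega_{-k}^{k}(u_\ell,\cdot)$ by the $\xi_{-k}^{k}$-measurable symmetric majorant $\sup_{y,y'\in\xi_{-k}^{k}(\cdot)}|u_\ell(y)-u_\ell(y')|$ and apply Proposition~\ref{pro:TLLbillZ2} a second time with this majorant as $\bar u$ and $v_m^{(k)}$ as $\bar v$. Summing and invoking \eqref{HYP1}, \eqref{HYP1bis} and \eqref{HYP2}, the $u$-piece is bounded by $\varepsilon(k)/n+o(1/n)$ with $\varepsilon(k)\to 0$ as $k\to\infty$; the $v$-piece is treated symmetrically (using the $\xi_{-k}^{\infty}$-measurable majorant of $\omega_{-k}^{\infty}(v_m,\cdot)$). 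Letting $n\to\infty$ and then $k\to\infty$ establishes \eqref{MIXBILL}.

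The main obstacle lies precisely in this last estimate: a naive Hölder bound on the approximation error gives only $O(n^{-1/p'})$, which is insufficient when $p>1$. The remedy is to apply the quantitative LLT of Proposition~\ref{pro:TLLbillZ2} a \emph{second time}, now to the continuity moduli themselves (through their $\xi_{-k}^{k}$- or $\xi_{-k}^{\infty}$-measurable majorants), so that the $L^1$-smallness furnished by \eqref{HYP2} gets multiplied by the genuine $1/n$ factor of the LLT rather than the inadequate $\bar\mu(\{S_n=m-\ell\})^{1/p'}$.
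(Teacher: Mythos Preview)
Your approach is correct but differs from the paper's. The paper reduces to $u,v\ge 0$ and then uses a \emph{monotone sandwich}: it defines the $\xi_{-k}^{k}$- (resp.\ $\xi_{-k}^{\infty}$-)measurable envelopes $u_\ell^{(k,\pm)}=\inf/\sup_{\xi_{-k}^{k}(\cdot)}u_\ell$ and $v_m^{(k,\pm)}=\inf/\sup_{\xi_{-k}^{\infty}(\cdot)}v_m$, so that $\int u^{(k,-)}\,v^{(k,-)}\!\circ f^n\le \int u\,v\!\circ f^n\le \int u^{(k,+)}\,v^{(k,+)}\!\circ f^n$. Proposition~\ref{pro:TLLbillZ2} is applied \emph{once} to each envelope pair, and the proof finishes by observing that \eqref{HYP2} forces $\int u^{(k,\pm)}\to\int u$, $\int v^{(k,\pm)}\to\int v$. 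No separate estimate of an approximation error is needed, and in particular the obstacle you flag (a naive H\"older bound giving only $n^{-1/p'}$) never arises. Your route---conditional expectations plus a second application of Proposition~\ref{pro:TLLbillZ2} to the oscillation majorants---works too and is conceptually natural, but it is longer and requires the reduction to $u,v\ge 0$ just as much (to pass from $|u_\ell-u_\ell^{(k)}|\le\Omega_\ell^{(k)}$ to a bound on the integral you need the remaining factor $v_m^{(k)}\circ\bar f^n\,\mathbf 1_{\{\cdot\}}$ to be nonnegative).

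One small point to tighten: in your ``symmetric'' treatment of the $v$-piece, the telescoping $J-J^{(k)}=\int(u_\ell-u_\ell^{(k)})(v_m^{(k)}\!\circ\bar f^n)\mathbf 1+\int u_\ell((v_m-v_m^{(k)})\!\circ\bar f^n)\mathbf 1$ leaves the raw $u_\ell$ (not $u_\ell^{(k)}$) in the second term, and $u_\ell$ is not $\xi_{-k}^{k}$-measurable, so Proposition~\ref{pro:TLLbillZ2} does not apply directly. You must either first majorize $u_\ell$ by its $\xi_{-k}^{k}$-measurable upper envelope (the paper's $u_\ell^{(k,+)}$), or use the alternative telescoping that places $u_\ell^{(k)}$ in the $v$-piece and then majorize $v_m$ in the $u$-piece. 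Either fix is routine; just make the choice explicit.
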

\begin{proof}
It is enough to prove the result for non-negative $u,v$.
We assume from now on that $u,v$ take their values in $[0,+\infty)$.
Let $\ell\in\mathbb Z^2$ and let $k$ be a positive integer. We define $u_{\ell}^{(k,\pm)}$ and $v_{\ell}^{(k,\pm)}$:
$$u_{\ell}^{(k,-)}(\bar x):=\inf_{\bar y\in\xi_{-k}^k(\bar x)} u_\ell(\bar y), \quad u_{\ell}^{(k,+)}(\bar x):=\sup_{\bar y\in\xi_{-k}^k(\bar x)} u_\ell(\bar y) $$
and
$$v_{\ell}^{(k,-)}(\bar x):=\inf_{\bar y\in\xi_{-k}^\infty(x)} v_\ell(\bar y),\quad v_{\ell}^{(k,+)}(\bar x):=\sup_{\bar y\in\xi_{-k}^\infty(x)} v_\ell(\bar y).$$
Observe that
\begin{equation}\label{MA1}
u_\ell^{(k,+)}- u_\ell^{(k,-)}\le 2 \omega_{-k}^k(u_\ell,\cdot)
\end{equation}
and that
\begin{equation}\label{MA2}
v_\ell^{(k,+)}- v_\ell^{(k,-)}\le 2 \omega_{-k}^\infty(v_\ell,\cdot).
\end{equation}
We then consider $u^{(k,\pm)},v^{(k,\pm)}: X\rightarrow \mathbb R$ such that
$$\forall \ell\in\mathbb Z^2,\quad 
      u^{(k,\pm)}(\ell,\cdot)\equiv u_\ell^{(k,\pm)}\quad\mbox{and}\quad v^{(k,\pm)}(\ell,\cdot)\equiv v_\ell^{(k,\pm)}.$$
Note that
\begin{equation}\label{MA}
 u^{(k,-)}  \le u\le  u^{(k,+)}
\quad\mbox{and}\quad
v^{(k,-)}  \le v\le  v^{(k,+)}\,
\end{equation}
and so
\begin{equation}\label{COMPINT}
\int_X u^{(k,-)}.v^{(k,-)}\circ f^n \, d\mu \le \int_X u.v\circ f^n \, d\mu
      \le \int_X u^{(k,+)}.v^{(k,+)}\circ f^n \, d\mu\, .
\end{equation}
We have
\begin{eqnarray*}
  \int_X u^{(k,\pm)}.v^{(k,\pm)} \circ f^n \, d\mu&=& 
      \sum_{\ell,m\in\mathbb Z^2}
      \int_{\bar X} u_\ell^{(k,\pm)}\mathbf 1_{\{S_n=m-\ell\}}
         v_m^{(k,\pm)} \circ f^n \, d\mu.
\end{eqnarray*}
Applying Proposition \ref{pro:TLLbillZ2} to the couples
$(u_{\ell}^{(k,-)},v_m^{(k,-)})$ and $(u_{\ell}^{(k,+)},v_m^{(k,+)})$, for every $\ell,m\in \mathbb Z^2$, we obtain that
$$\left|\int_X u^{(k,\pm)}.v^{(k,\pm)} \circ f^n \, d\mu
   -\sum_{\ell,m}
   \frac{\Phi_B\left(\frac{m-\ell}{\sqrt{n-2k}}\right)}{n-2k}\int_{\bar X} u_\ell^{(k,\pm)}\, d\bar\mu\, \int_{\bar X} v_m^{(k,\pm)}\, d\bar\mu\right|
\le$$
$$
\le \sum_{\ell,m\in\mathbb Z^2}\frac{ck\Vert  v_m^{(k,\pm)}\Vert_p \Vert   u_\ell^{(k,\pm)}\Vert_\infty}{(n-2k)^{\frac 32}}=o(n^{-1}),
$$
due to \eqref{HYP1} and \eqref{HYP1bis}.
Hence
$$\int_X u^{(k,\pm)}.v^{(k,\pm)} \circ f^n \, d\mu
   =\frac 1{n-2k}\sum_{\ell,m}
   \Phi_B\left(\frac{m-\ell}{\sqrt{n-2k}}\right)\int_{\bar X} u_\ell^{(k,\pm)}\, d\bar\mu\, \int_{\bar X} v_m^{(k,\pm)}\, d\bar\mu +o(n^{-1}).$$
But $\Phi_B$ is continuous and bounded by $\Phi_B(0)$. 
Hence, due to the Lebesgue dominated convergence theorem,
we obtain
\begin{eqnarray}
\int_X u^{(k,\pm)}.v^{(k,\pm)} \circ f^n \, d\mu
   &=&\frac {\Phi_B(0)}{n-2k}\sum_{\ell,m}
   \int_{\bar X} u_\ell^{(k,\pm)}\, d\bar\mu\, \int_{\bar X} v_m^{(k,\pm)}\, d\bar\mu +o(n^{-1})\nonumber\\
   &=&\frac {\Phi_B(0)}{n}
   \int_{X} u^{(k,\pm)}\, d\mu\, \int_{X} v^{(k,\pm)}\, d\mu +o(n^{-1}).\label{MIXAPPROX}
\end{eqnarray}
Moreover \eqref{HYP2}, \eqref{MA1} and \eqref{MA2} imply that
$$\lim_{k\rightarrow +\infty}\int_X |u^{(k,\pm)}-u|\, d\mu 
     =\lim_{k\rightarrow +\infty}\int_X |v^{(k,\pm)}-v|\, d\mu 
     =0.$$
We conclude by combining this with
\eqref{COMPINT} and \eqref{MIXAPPROX}.
\end{proof}
As a consequence we obtain the mixing for dynamically Lipschitz
functions. Let $\vartheta\in(0,1)$. We set
$$d_\vartheta(x,y):=\vartheta^{s(x,y)}\, ,$$
where $s(x,y)$ is the maximum of the integers $k>0$ such that $x$ and $y$ lie in the same connected component of $M\setminus
\bigcup_{j=-k}^kT^{-j}S_0$, where $S_0$ is the set of vectors of $M$
tangent to $\partial Q$.
The function $s(\cdot,\cdot)$ is called {\bf separation time}.
We set
$$L_\vartheta(u):=\sup_{x\ne y}\frac{|u(x)-u(y)|}{d_\vartheta(x,y)} $$
for the Lipschitz constant of $u$ with respect to $d_\vartheta$.

It is worth noting that, for every $\eta\in(0,1]$, there exists
$\vartheta>0$ such that
every $\eta$-H\"older function (both in position-speed)
is dynamically Lipschitz continuous with respect to $\vartheta$.
\begin{corollary}
Assume that $u,v:M\rightarrow \mathbb R$ are bounded uniformly dynamically H\"older (in position and in speed) and that
\begin{equation}\label{HYP1A}
\sum_{\ell\in\mathbb Z^2}(\Vert u\mathbf 1_{\mathcal C_\ell}\Vert_\infty+\Vert v\mathbf 1_{\mathcal C_\ell}\Vert_\infty)<\infty \, ,
\end{equation}
and
\begin{equation}\label{HYP1bisA}
\sum_{\ell\in\mathbb Z^2}(L_\vartheta( u\mathbf 1_{\mathcal C_\ell})+L_\vartheta(v\mathbf 1_{\mathcal C_\ell}))<\infty \, .
\end{equation}
Then
\begin{equation}\label{MIXBILL1}
\int_X u.v\circ f^n\, d\mu= \frac{\Phi_B(0)}{n}\int_Xu\, d\mu
      \int_Xv\, d\mu+o(n^{-1}).
\end{equation}
\end{corollary}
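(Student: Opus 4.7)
The plan is to deduce the corollary from the preceding theorem by verifying that hypotheses \eqref{HYP1}, \eqref{HYP1bis}, and \eqref{HYP2} all follow from \eqref{HYP1A}, \eqref{HYP1bisA} together with dynamical Hölder regularity. Under the identification $\bar X=\C_0$, the function $u_\ell=u(\cdot,\ell)$ on $\bar X$ is a $\mathbb Z^2$-translate of $u\mathbf 1_{\C_\ell}$, so in particular $\Vert u_\ell\Vert_\infty=\Vert u\mathbf 1_{\C_\ell}\Vert_\infty$, and similarly for $v$. Since $\bar\mu$ is a probability measure, $\Vert v_\ell\Vert_p\le\Vert v\mathbf 1_{\C_\ell}\Vert_\infty$, so \eqref{HYP1A} alone already delivers \eqref{HYP1}.

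The main quantitative step will be to turn dynamical Lipschitz regularity into exponential control on the local moduli of continuity. If $\bar x,\bar y$ lie in the same atom of $\xi_{-k}^k$, then because $\psi$ is locally constant on $\bar X\setminus R_0$, the lifted points $(\bar x,\ell),(\bar y,\ell)\in M$ share the same cell-itinerary under $T$ on the window $[-k,k]$. A quick check shows $T^{-j}S_0=(\bar f^{-j}R_0)\times\mathbb Z^2$, so the connected components of $M\setminus\bigcup_{|j|\le k}T^{-j}S_0$ sit above the atoms of $\xi_{-k}^k$ at a fixed cell label. Hence $s((\bar x,\ell),(\bar y,\ell))\ge k$, and the dynamical Lipschitz bound yields
\[
\omega_{-k}^k(u_\ell,\cdot)\le L_\vartheta(u\mathbf 1_{\C_\ell})\,\vartheta^k,\qquad \omega_{-k}^\infty(v_\ell,\cdot)\le L_\vartheta(v\mathbf 1_{\C_\ell})\,\vartheta^k,
\]
the second bound being the same argument applied to the refined partition $\xi_{-k}^\infty$.

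With these pointwise bounds in hand, \eqref{HYP1bis} and \eqref{HYP2} become trivialities. For \eqref{HYP1bis}, $\Vert\omega_{-k}^\infty(v_\ell,\cdot)\Vert_p\le L_\vartheta(v\mathbf 1_{\C_\ell})\vartheta^k$ sums over $\ell$ by \eqref{HYP1bisA} at each fixed $k$. For \eqref{HYP2},
\[
\sum_\ell\bigl(\Vert\omega_{-k}^k(u_\ell,\cdot)\Vert_1+\Vert\omega_{-k}^\infty(v_\ell,\cdot)\Vert_1\bigr)\le\vartheta^k\sum_\ell\bigl(L_\vartheta(u\mathbf 1_{\C_\ell})+L_\vartheta(v\mathbf 1_{\C_\ell})\bigr),
\]
which tends to $0$ as $k\to\infty$ since $\vartheta<1$ and the $\ell$-sum is finite by \eqref{HYP1bisA}. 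With all three hypotheses of the preceding theorem verified, its conclusion is exactly \eqref{MIXBILL1}.

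The only potentially delicate point, really a matter of bookkeeping, is the identification of $\xi_{-k}^k$-atoms in $\bar X$ with the connected components of $M\setminus\bigcup_{|j|\le k}T^{-j}S_0$ sitting above a given cell; once this is granted, the transfer from dynamical Hölder regularity to the theorem's moduli hypotheses is immediate and the corollary reduces to \eqref{HYP1A}--\eqref{HYP1bisA}.
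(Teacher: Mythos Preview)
Your proposal is correct and follows precisely the route the paper intends: the paper states the corollary ``as a consequence'' of the preceding theorem without giving a proof, and your verification that \eqref{HYP1A}--\eqref{HYP1bisA} imply \eqref{HYP1}, \eqref{HYP1bis}, \eqref{HYP2} via the bound $\omega_{-k}^k(u_\ell,\cdot)\le L_\vartheta(u\mathbf 1_{\mathcal C_\ell})\vartheta^k$ is exactly the missing step. The identification $T^{-j}S_0=(\bar f^{-j}R_0)\times\mathbb Z^2$ you single out as the only subtlety is indeed immediate from the skew-product structure, so nothing further is needed.
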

\bigskip

{\bf Acknowledgment.\/} The author wishes to thank Marco Lenci
for having suggested this work.


\end{document}